\declaretheorem[name=Theorem]{thm}
\newtheorem{lemma}{Lemma}
\newtheorem{definition}{Definition}
\newtheorem{proposition}{Proposition}
\newtheorem{corollary}{Corollary}
\newtheorem{question}{Question}
\theoremstyle{definition}
\newtheorem{example}{Example}
\newtheorem{remark}{Remark}
\begin{document}

\title{Equivariant cobordisms between freely-periodic knots} 

\author{Keegan Boyle and Jeffrey Musyt}  

\address[Keegan Boyle]{Department of Mathematics, University of British Columbia, Canada} 
\email{kboyle@math.ubc.ca}
\address[Jeffrey Musyt]{Department of Mathematics and Statistics, Slippery Rock University, USA} 
\email{jeffrey.musyt@sru.edu}

\setcounter{section}{0}

\begin{abstract}
We consider free symmetries on cobordisms between knots. We classify which freely periodic knots bound equivariant surfaces in the 4-ball in terms of corresponding homology classes in lens spaces. A key tool is the homology cobordism classification of lens spaces using d-invariants. We give a numerical condition determining the free periods for which torus knots bound equivariant surfaces in the 4-ball. 
\end{abstract}
\maketitle

\section{Introduction}

A knot $K \subset S^3$ is \emph{freely $(p,q)$-periodic} if there is a free $\mathbb{Z}/p\mathbb{Z}$-action on $S^3$ with quotient $L(p,q)$ which leaves $K$ invariant. An example is shown in Figure \ref{fig:T(2,7)}. Thinking of $S^3$ as the unit sphere in $\mathbb{C}^2$, this symmetry is conjugate to $(z,w) \mapsto (\alpha z, \alpha^q w)$, where $\alpha = e^{2 \pi i/ p}$. 
\begin{figure} 
\scalebox{.5}{\includegraphics{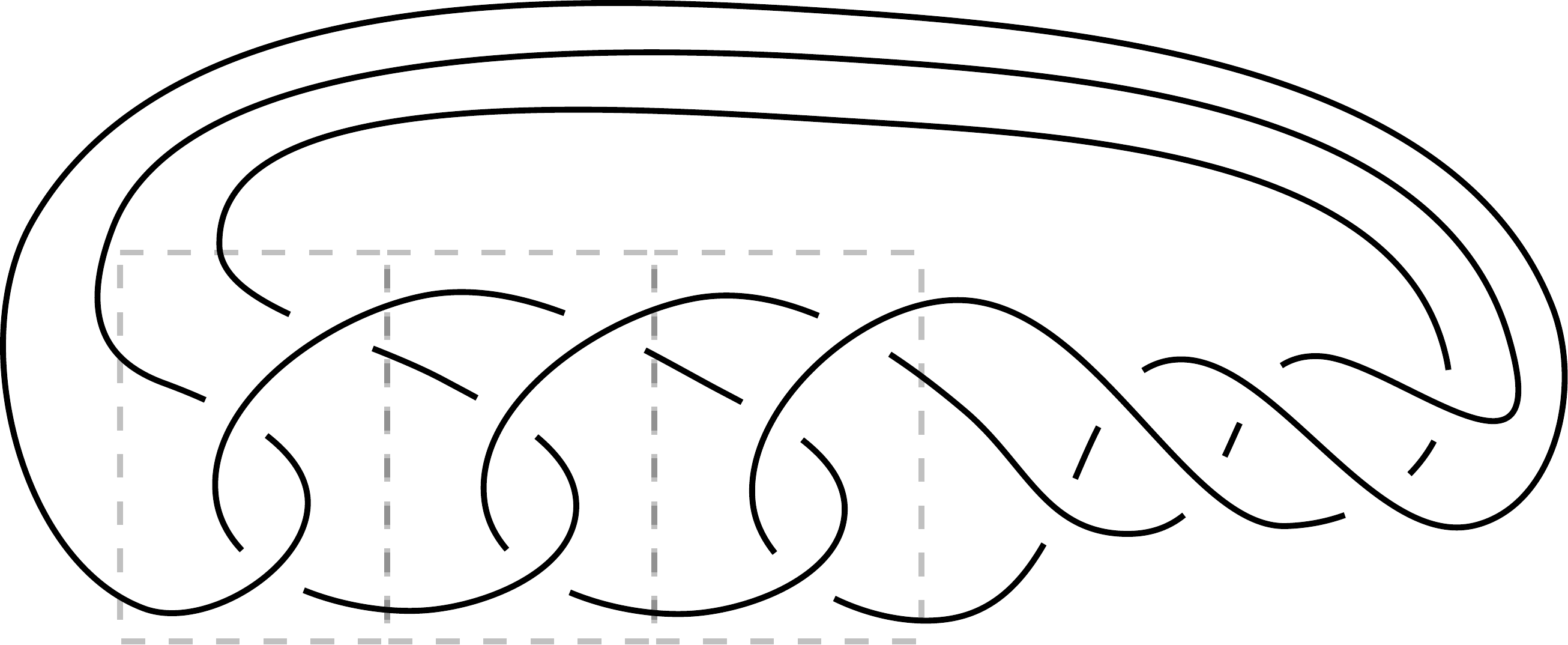}}
\caption{A freely $(3,1)$-periodic diagram for $T(2,7)$. This diagram consists of three identical tangles and one full twist.}
\label{fig:T(2,7)}
\end{figure}
The goal of this paper is to understand when a freely periodic knot bounds an equivariant orientable surface in $B^4$.

\begin{definition}
A freely $(p,q)$-periodic knot $K$ is an \emph{equivariant boundary} if there is a smooth order $p$ extension $\rho:B^4 \to B^4$ of the $\mathbb{Z}/p\mathbb{Z}$ symmetry on $S^3$ and an orientable surface $S$ properly smoothly embedded in $B^4$ with $\rho(S) = S$ and $\partial S  = K$.
\end{definition}

It is interesting to compare the case of periodic and strongly invertible knots, which always equivariantly bound an orientable surface in $S^3$ (and hence in $B^4$), see for example \cite[Proposition 1]{BoyleIssa}. On the other hand, freely periodic knots never bound equivariant orientable surfaces in $S^3$, since they would necessarily contain a fixed point by the Lefschetz fixed-point theorem, but the symmetry acts freely on $S^3$. 

In this paper we characterize which freely periodic knots equivariantly bound in $B^4$. 

\begin{restatable}{thm}{simple}
\label{thm:simple}
Let $K \subset S^3$ be a freely $(p,q)$-periodic knot and $\pi:S^3 \to L(p,q)$ be the quotient map. Then $K$ is an equivariant boundary if and only if $\pi(K)$ represents a simple homology class (see Definition \ref{def:simple}) in $H_1(L(p,q);\mathbb{Z})$.
\end{restatable}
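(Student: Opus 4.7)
The plan is to pass to the quotient by the $\mathbb{Z}/p\mathbb{Z}$-action and translate the equivariant-surface problem into one about surfaces in 4-manifolds bounded by lens spaces, where the d-invariant obstructions to rational homology cobordism between lens spaces (flagged in the abstract as the key tool) control the answer.

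For the forward direction, suppose $K$ bounds an equivariant surface $S$ under $\rho\colon B^4 \to B^4$. By the Lefschetz fixed-point theorem, $\mathrm{Fix}(\rho)$ is nonempty, and by Smith theory together with smoothness of $\rho$ (and an equivariant perturbation if needed) it is a controlled fixed set whose equivariant tubular neighborhoods have lens-space boundaries $L(p,r_i)$ determined by the local linear $\mathbb{Z}/p\mathbb{Z}$-representation. After an equivariant isotopy I may assume $S$ is disjoint from $\mathrm{Fix}(\rho)$. Passing to the quotient produces a smooth cobordism $W$ from $L(p,q)$ to $\bigsqcup_i L(p,r_i)$ together with a properly embedded surface $\pi(S) \subset W$ whose only boundary is $\pi(K) \subset L(p,q)$. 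Since $B^4$ is contractible, $W$ is a rational homology cobordism, and feeding this into the d-invariant machinery for lens-space cobordism forces $[\pi(K)] \in H_1(L(p,q);\mathbb{Z})$ to be a simple class in the sense of Definition~\ref{def:simple}.

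For the reverse direction, given that $\pi(K)$ represents a simple class, I invert the construction. Unpacking the definition should furnish a rational homology cobordism $W$ from $L(p,q)$ to a collection of lens spaces each bounding a rational homology ball, together with a properly embedded surface in $W$ realizing $\pi(K)$ on the $L(p,q)$ boundary. The $p$-fold cyclic cover of $W$ classified by the natural surjection $\pi_1(W) \to \mathbb{Z}/p\mathbb{Z}$ is a cobordism whose $L(p,q)$ end lifts to $S^3$ and whose other ends lift to 3-spheres (each lens space in question being $p$-fold covered by $S^3$). Capping those 3-spheres off with 4-balls and identifying the closed-up cover with $B^4$ recovers the desired extension $\rho$, while the lifted surface provides $S$.

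The principal obstacle lives on both sides of the quotient-lift correspondence. On the forward side, I must extract from the d-invariants of $L(p,q)$ and of the $L(p,r_i)$ precisely the numerical condition packaged into Definition~\ref{def:simple}, which in turn requires pinning down which $L(p,r_i)$ can actually arise from local models of smooth $\mathbb{Z}/p\mathbb{Z}$-actions at a fixed point. On the reverse side, the delicate step is verifying that the capped-off cyclic cover is genuinely diffeomorphic to $B^4$ and not merely a homotopy or rational homology ball; this likely proceeds by using the homology cobordism classification of lens spaces to modify $W$ (for instance by connect-summing with rational homology spheres bounding rational balls) before taking the cover. Handling non-prime $p$, where proper subgroups of $\mathbb{Z}/p\mathbb{Z}$ may contribute additional fixed strata, will require extra bookkeeping.
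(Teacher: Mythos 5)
Your forward direction contains a step that is not only unjustified but false, and it deletes exactly the geometric input the argument needs. You cannot arrange, by an equivariant isotopy, that $S$ is disjoint from $\mathrm{Fix}(\rho)$. For $p>1$ the fixed set is a single interior point $F$ (Smith theory, plus the fact that the action is free on $\partial B^4$), and $F$ must lie on $S$: if $S$ missed $F$, then the image of $S$ in the quotient $(B^4 \setminus N(F))/\rho$, which is a homology cobordism between $L(p,q)$ and the lens space $\partial N(F)/\rho$ (Lemma \ref{lemma:quotientcobordism}), would be a properly embedded surface whose only boundary is $\pi(K)$, forcing $[\pi(K)]=0$ in $H_1(L(p,q);\mathbb{Z})$; but $K$ is connected and covers $\pi(K)$ $p$-to-$1$, so $[\pi(K)]$ generates a $\mathbb{Z}/p\mathbb{Z}$ and is nonzero. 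So your quotient surface necessarily has a second boundary component: the link of $F$ in $S$ is an invariant unknot in $\partial N(F)\cong S^3$, and its image in $\partial N(F)/\rho$ is a Heegaard core, hence a simple class (Proposition \ref{prop:unknotclasses}). It is the comparison of this second boundary class with $[\pi(K)]$ via Theorem \ref{thm:d-invariants} (a homology cobordism between lens spaces induces $\pm\mathrm{Id}$ on $H_1$) that yields simplicity of $[\pi(K)]$; the $d$-invariant machinery applied to a surface bounded only by $\pi(K)$, as in your setup, gives no such conclusion. This is precisely the content of the paper's Lemma \ref{lemma:unknotcobordism} combined with Theorem \ref{thm:d-invariants}, so the missing idea is that the fixed point lies on $S$ and contributes an unknotted, simple-class boundary circle after quotienting.

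Your reverse direction is also not a proof as written. The phrase ``unpacking the definition should furnish a rational homology cobordism'' never uses the actual content of Definition \ref{def:simple}, and the final step, identifying the capped-off $p$-fold cyclic cover with $B^4$ smoothly, is unjustified; in general such an identification is as strong as the smooth $4$-dimensional Poincar\'e conjecture, so this route cannot be completed by the methods you indicate. The direct argument is much simpler: if $[\pi(K)]$ is simple, then $\pi(K)$ is homologous in $L(p,q)$ to the core $\overline{U}$ of a Heegaard solid torus, hence cobounds an embedded surface with $\overline{U}$; lifting this surface gives an equivariant cobordism in $S^3\times I$ from $K$ to the freely periodic unknot $U$ covering $\overline{U}$, and the cone on $(S^3,U)$ with the coned (linear) free action is a smooth equivariant disk in $B^4$, which you glue on to obtain the equivariant surface for $K$. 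No auxiliary rational homology cobordisms, covers, or capping arguments are needed, and the prime versus non-prime bookkeeping you flag is not where the difficulty lies.
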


Furthermore, we prove that a simple count of strands in a freely periodic diagram determines if the knot equivariantly bounds.

\begin{restatable}{thm}{simplediagram}
\label{thm:simplediagram}
Let $D$ be an oriented freely $(p,q)$-periodic diagram (see Definition \ref{def:diagram}) for a knot $K$. Let $m$ be the signed count of strands in $D$. Then $K$ represents a simple homology class in the quotient $L(p,q)$ if and only if $m \equiv \pm1$ or $m \equiv \pm q^{-1}$ (mod $p$).
\end{restatable}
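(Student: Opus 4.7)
The plan is to identify $m$ (modulo $p$) with the homology class $[\pi(K)]\in H_1(L(p,q);\mathbb{Z})\cong\mathbb{Z}/p\mathbb{Z}$, up to the ambiguity between the two Heegaard cores of $L(p,q)$. Once this is done, the theorem reduces to the characterization of simple classes from Definition \ref{def:simple}: in the genus-one Heegaard splitting $L(p,q)=V_1\cup_T V_2$, the cores of $V_1$ and $V_2$ represent $\pm 1$ and $\pm q^{-1}$ in $\mathbb{Z}/p\mathbb{Z}$, and these are precisely the simple classes.

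The main work is interpreting $m$ geometrically. By Definition \ref{def:diagram}, a freely $(p,q)$-periodic diagram lives in an annular region of the plane with a rotational symmetry of its constituent tangles, together with overall twisting encoding the $q$ in $L(p,q)$. I would check that a radial arc $\gamma$ separating two adjacent fundamental tangles in $D$ descends to a curve in $L(p,q)$ that bounds a meridional disk $D_\mu$ of one of the two Heegaard solid tori, and that the signed intersection $\pi(K)\cdot D_\mu$ is exactly the signed strand count $m$. Hence $[\pi(K)]\equiv m\cdot e$ in $\mathbb{Z}/p\mathbb{Z}$, where $e$ is the generator Poincar\'e-dual to $[D_\mu]$, namely $\pm 1$ or $\pm q^{-1}$.

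Combining these observations, $\pi(K)$ represents a simple class iff $m\equiv \pm 1$ or $\pm q^{-1}\pmod{p}$, as claimed. The main obstacle is making the Heegaard identification precise: one must track orientation conventions on both the annular fundamental domain of $D$ and on the Seifert structure of $S^3$ inherited from the $\mathbb{Z}/p$-action, and see how they descend to the Heegaard torus of $L(p,q)$. This bookkeeping is routine, and the built-in symmetry of the simple condition under $q\leftrightarrow q^{-1}$ absorbs most of the ambiguity, making the final ``if and only if'' statement robust to the choice of which Heegaard generator one computes against.
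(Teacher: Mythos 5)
Your proposal is correct and follows essentially the same route as the paper: identify the class of the quotient knot in $H_1(L(p,q))\cong\mathbb{Z}/p\mathbb{Z}$ with $m$ times the class of a Heegaard core (you do this via intersection with a meridian disk, the paper by noting a single strand is homologous to the core), and then observe that the simple classes are exactly $\pm1$ and $\pm q^{-1}$ times that generator since the other core represents $q^{-1}\cdot c$. The extra care you take with orientation conventions and the meridian-disk duality is just a more explicit version of the paper's two-line argument.
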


In the case of torus knots, this reduces to a numerical condition on the torus knot parameters and the order of the symmetry. 

\begin{restatable}[Corollary of Theorem \ref{thm:simplediagram}]{cor}{torusknots}
\label{cor:torusknots}
A freely $(p,q)$-periodic symmetry of $T(r,s)$ equivariantly bounds if and only if $p$ is a divisor of $r+1,r-1,s+1,$ or $s-1$.
\end{restatable}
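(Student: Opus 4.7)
The plan is to identify the homology class $[\pi(T(r,s))] \in H_1(L(p,q);\mathbb{Z}) \cong \mathbb{Z}/p$ explicitly and then apply the simplicity criterion from Theorem \ref{thm:simple} together with the numerical reformulation from Theorem \ref{thm:simplediagram}.

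First I would set up the free symmetry analytically. Realizing the $\mathbb{Z}/p$-action on $S^3 \subset \mathbb{C}^2$ as $(z,w) \mapsto (\alpha z, \alpha^q w)$ with $\alpha = e^{2\pi i/p}$, and parametrizing the torus knot as $\gamma(t) = (e^{2\pi i r t}, e^{2\pi i s t})/\sqrt{2}$ on the Clifford torus, invariance of $T(r,s)$ under the action amounts to $qr \equiv s \pmod{p}$ (or, after swapping $z$ and $w$, equivalently $qs \equiv r \pmod{p}$). In particular, existence of the symmetry forces $\gcd(p,r)=\gcd(p,s)=1$, so both $r$ and $s$ are invertible mod $p$.

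Next I would compute the homology class of $\pi(T(r,s))$. Traversing the quotient knot once lifts to the arc $\gamma(t)$ for $t \in [0,1/p]$, ending at $\gamma(1/p) = (\alpha^r, \alpha^s)/\sqrt{2}$. Using $s \equiv qr \pmod{p}$, this endpoint equals $\alpha^r \cdot \gamma(0)$, so the deck transformation realized by the lift is $\alpha^r$, giving $[\pi(T(r,s))] = r \in \mathbb{Z}/p$. Equivalently, the signed strand count $m$ in any freely periodic diagram of $T(r,s)$ satisfies $m \equiv r \pmod{p}$.

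Finally I would apply Theorem \ref{thm:simplediagram}: the symmetry equivariantly bounds iff $r \equiv \pm 1 \pmod{p}$ or $r \equiv \pm q^{-1} \pmod{p}$. Substituting $q \equiv sr^{-1} \pmod{p}$, so $q^{-1} \equiv rs^{-1}$, the condition $r \equiv \pm q^{-1}$ rearranges (using invertibility of $r$) to $s \equiv \pm 1 \pmod{p}$. The four cases together collapse to $p \mid r \pm 1$ or $p \mid s \pm 1$, as required.

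The main obstacle I anticipate is making the second step precise: the Clifford-torus calculation identifies the homology class directly, but one must also verify that this matches the signed strand count in some \emph{freely} $(p,q)$-periodic diagram (in the sense of Definition \ref{def:diagram}) so that Theorem \ref{thm:simplediagram} applies. An alternative would be to construct an explicit such diagram for $T(r,s)$ in the style of Figure \ref{fig:T(2,7)} and count strands by hand, but the analytic approach avoids case analysis on which presentation (as a closed $r$-braid or $s$-braid) is compatible with a given $q$.
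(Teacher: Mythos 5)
Your proposal is correct in substance, but it takes a somewhat different route from the paper. The paper's proof is short because it leans on Theorem \ref{thm:torusknotsymmetries}: it takes the freely $(p,q)$-periodic diagram for $T(r,s)$ on $r$ strands produced there, applies Theorem \ref{thm:simplediagram} to get the criterion $r \equiv \pm 1$ or $r \equiv \pm q^{-1} \pmod p$, and then substitutes $\pm q \equiv sr^{-1}$ (also from Theorem \ref{thm:torusknotsymmetries}) to land on $p \mid r\pm 1$ or $p \mid s \pm 1$. You instead work in the analytic model, re-deriving the congruence $qr \equiv s$ from invariance of the Clifford-torus parametrization and computing $[\pi(T(r,s))] = r$ via the deck transformation realized by one traversal of the quotient knot. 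The algebraic endgame is identical; what your route buys is independence from the diagrammatic statement, and the convention-matching issue you flag is resolvable entirely inside your model: computing the classes of the two cores $\{z=0\}$ and $\{w=0\}$ in the same deck-group terms gives $q^{-1}$ and $1$ respectively, so the simple classes are exactly $\{\pm 1, \pm q^{-1}\}$ and you can apply Theorem \ref{thm:simple} directly, bypassing Theorem \ref{thm:simplediagram} altogether.

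Two caveats. First, the corollary quantifies over an arbitrary freely $(p,q)$-periodic symmetry of $T(r,s)$, while your computation is carried out for the standard action preserving the Clifford-torus position of the knot; to cover the general case you need that any free period of $T(r,s)$ of a given order is conjugate (as a symmetry of the pair) to this standard one, which is precisely the uniqueness statement in Theorem \ref{thm:torusknotsymmetries} and should be cited or proved. Second, the compatibility condition in general reads $\pm q^{\pm 1} \equiv sr^{-1} \pmod p$; you treat the cases $qr \equiv s$ and $qs \equiv r$, but the sign cases should at least be acknowledged --- the same arithmetic (with $[\pi(K)]$ equal to $r$ or $s$ accordingly) still collapses all four cases to $p \mid r \pm 1$ or $p \mid s \pm 1$, so the conclusion is unaffected.
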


Throughout the paper, all statements are in the smooth category and all surfaces are orientable. 

\subsection{Acknowledgments}

We would like to thank Ahmad Issa for some helpful conversations and Robert Lipshitz for helpful comments on an earlier draft.

\section{Freely-periodic knots}

Our results are based on considering the homology class which a freely periodic knot represents in the quotient lens space $L(p,q)$. We begin by distinguishing some elements of $H_1(L(p,q))$.

\begin{definition} \label{def:simple}
A \emph{simple} homology class in $H_1(L(p,q);\mathbb{Z})$ is one which is represented by the core of a handlebody in a genus 1 Heegaard splitting.
\end{definition}
In fact there is a unique genus 1 Heegaard splitting for $L(p,q)$, see \cite{MR710104}, so that there are at most 4 simple homology classes in $H_1(L(p,q))$ coming from the two orientations on the cores of the two handlebodies.

%\section{Equivariant surfaces}

The following proposition is a restatement of \cite[Lemma 5.2]{MR2884030}, noting that the lift in $S^3$ of a knot $\overline{K}$ in $L(p,q)$ is an unknot if and only if $\overline{K}$ is a rational unknot. 
\begin{proposition} \cite[Lemma 5.2]{MR2884030}\label{prop:unknotclasses}
Let $U$ be a freely $(p,q)$-periodic unknot. Then the quotient $\overline{U}$ represents a simple homology class in $H_1(L(p,q))$. Conversely, every simple homology class is represented by the quotient of an unknot. 
\end{proposition}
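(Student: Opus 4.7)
The plan is to reduce both directions to the already-cited \cite[Lemma 5.2]{MR2884030}, which, as noted in the parenthetical remark just before the proposition, says that a knot $\overline{K} \subset L(p,q)$ lifts to an unknot in $S^3$ if and only if $\overline{K}$ is a rational unknot, i.e.\ isotopic to the core of a handlebody in the genus-1 Heegaard splitting of $L(p,q)$. This is essentially the same data as Definition \ref{def:simple}, so the work will be a direct translation.

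For the forward direction, I would start with a freely $(p,q)$-periodic unknot $U \subset S^3$ and consider $\overline{U} = \pi(U)$. Since $\pi^{-1}(\overline{U}) = U$ is unknotted, the cited lemma gives that $\overline{U}$ is a rational unknot, hence isotopic to the core of a solid torus in the genus-1 Heegaard splitting of $L(p,q)$. Combining this with Definition \ref{def:simple} immediately shows that $\overline{U}$ represents a simple homology class.

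For the converse, I would begin with a simple homology class $\alpha \in H_1(L(p,q);\mathbb{Z})$ and, by Definition \ref{def:simple}, pick a representative $\gamma$ which is the core of a handlebody in the genus-1 Heegaard splitting of $L(p,q)$. Such a $\gamma$ is a rational unknot, so again by \cite[Lemma 5.2]{MR2884030} its preimage $\pi^{-1}(\gamma) \subset S^3$ is an unknot. By construction $\pi^{-1}(\gamma)$ is invariant under the deck transformation and its quotient is $\gamma$, giving the required freely $(p,q)$-periodic unknot whose quotient represents $\alpha$.

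The only real ``obstacle'' is packaged entirely inside the cited Lemma 5.2; once that is invoked, the proposition is just the observation that ``rational unknot'' and ``core of a handlebody in the (unique) genus-1 Heegaard splitting'' name the same collection of knots in $L(p,q)$, and hence the same four homology classes.
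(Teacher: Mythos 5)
Your argument is correct and matches the paper's treatment exactly: the proposition is stated there as a direct restatement of \cite[Lemma 5.2]{MR2884030}, with the same observation that a knot in $L(p,q)$ lifts to an unknot if and only if it is a rational unknot, i.e.\ isotopic to a core of the genus-1 Heegaard splitting, which by Definition \ref{def:simple} gives both directions. No further comment is needed.
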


We now define our main object of study: equivariant cobordisms between freely periodic knots. As we will see in Lemma \ref{lemma:unknotcobordism}, studying equivariant cobordisms from $K$ to the unknot is equivalent to studying equivariant surfaces with boundary $K$. 

\begin{definition}
An \emph{equivariant cobordism} $(S^3 \times I, S, \overline{\rho})$ between a freely $(p,q)$-periodic knot $(K,\rho)$ and a freely $(p,q')$-periodic knot $(K',\rho')$ is a proper smooth embedding of a surface $S \to S^3 \times I$ such that $S \cap (S^3 \times \{0\}) = K$ and $S \cap (S^3 \times \{1\}) = K'$, along with a free smooth $\mathbb{Z}/p\mathbb{Z}$ action $\overline{\rho}$ on $S^3 \times I$ which restricts to the freely periodic symmetries $\rho$ on $S^3 \times \{0\}$ and $\rho'$ on $S^3 \times \{1\}$, and leaves $S$ invariant.
\end{definition}

To study these cobordisms, it will be convenient to first take the quotient by the free symmetry. 

\begin{lemma} \label{lemma:quotientcobordism}
Let $W = S^3 \times I$, let $\overline{\rho}$ be a finite order diffeomorphism acting freely on $W$, and let $\overline{\rho}|_{S^3 \times \{0\}} = \rho$ and $\overline{\rho}|_{S^3 \times \{1\}} = \rho'$; for example an equivariant cobordism between freely periodic knots. Then the quotient $\overline{W} = W/\overline{\rho}$ is a homology cobordism between lens spaces.
\end{lemma}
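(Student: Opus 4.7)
My plan is to verify that (i) $\overline{W}$ is a smooth four-manifold with boundary $L(p,q) \sqcup L(p,q')$, and (ii) the inclusion of each boundary component induces an isomorphism on $H_*(-;\mathbb{Z})$.

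Point (i) is essentially automatic: because $\overline{\rho}$ is a free smooth action of a finite group on $W$, the quotient $\overline{W}$ inherits a canonical smooth structure and $\pi\colon W \to \overline{W}$ is a regular $p$-fold covering. The hypothesis that $\overline{\rho}|_{S^3 \times \{0\}} = \rho$ and $\overline{\rho}|_{S^3 \times \{1\}} = \rho'$ then identifies $\partial \overline{W}$ as $L(p,q) \sqcup L(p,q')$.

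For (ii), I would appeal to Whitehead's theorem. Since $W = S^3 \times I$ is simply connected, $\pi$ realizes $W$ as the universal cover of $\overline{W}$, so $\pi_1(\overline{W}) \cong \mathbb{Z}/p\mathbb{Z}$, identified with the deck group of $\overline{\rho}$. The inclusion $\iota\colon L(p,q) \hookrightarrow \overline{W}$ lifts along universal covers to $S^3 \times \{0\} \hookrightarrow W$, which is a homotopy equivalence. Since $\overline{\rho}|_{S^3 \times \{0\}} = \rho$, the inclusion $\iota$ matches the deck-group generator of $\pi_1(L(p,q))$ with that of $\pi_1(\overline{W})$, so $\iota_*$ is an isomorphism on $\pi_1$. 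Having an isomorphism on $\pi_1$ and a homotopy equivalence between the universal covers, Whitehead's theorem implies that $\iota$ is itself a homotopy equivalence, and hence induces an isomorphism on integral homology. The identical argument at $t = 1$ handles $L(p,q')$.

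The one step requiring care is the verification that $\iota_*$ is an isomorphism on $\pi_1$, which amounts to matching the two identifications of $\pi_1$ with the deck group $\mathbb{Z}/p\mathbb{Z}$ — this is exactly where the hypothesis on the restriction of $\overline{\rho}$ to the boundary is used, beyond the identification of the boundary components themselves. The remainder of the argument is routine covering space theory.
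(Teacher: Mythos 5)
Your proof is correct, and its core is the same covering-space observation the paper uses: since $S^3 \times I$ is simply connected and the action is free, $W \to \overline{W}$ is the universal cover, $\pi_1(\overline{W}) \cong \mathbb{Z}/p\mathbb{Z}$ is the deck group, and the hypothesis $\overline{\rho}|_{S^3\times\{0\}} = \rho$ matches this with the deck group of $S^3 \to L(p,q)$, so the boundary inclusions are isomorphisms on $\pi_1$ and hence on $H_1$. Where you go further is the Whitehead step: by lifting the inclusion to the universal covers, where $S^3 \times \{0\} \hookrightarrow S^3 \times I$ is a deformation retract, you conclude that each boundary inclusion is a homotopy equivalence, which gives isomorphisms on $H_k$ for \emph{all} $k$ in one stroke. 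The paper's written proof only records the isomorphisms on $H_0$ and $H_1$ and leaves the degrees $2$ and $3$ implicit (they can be recovered from your argument, or from a Poincar\'e--Lefschetz duality argument), so your version is slightly more complete as a verification of the full definition of homology cobordism, at the modest cost of invoking Whitehead's theorem rather than staying purely at the level of homology.
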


\begin{proof}
Since $S^3 \times I$ is simply connected, $\pi_1(\overline{W}) = \mathbb{Z}/p\mathbb{Z} = H_1(\overline{W})$, and the maps 
\[
H_1(S^3/\rho) \to H_1(\overline{W}) \mbox{ and } H_1(S^3/\rho') \to H_1(\overline{W})
\] 
induced by inclusion are isomorphisms. Since $W$ is connected, the same is true for $H_0$ so that $\overline{W}$ is a homology cobordism.
\end{proof}

We are interested in the maps induced on homology from these homology cobordisms. The following theorem, which follows from an analysis of the $d$-invariants of lens spaces, is not stated explicitly in \cite{DoigWehrli}, but follows immediately from their main theorem and \cite[Lemma 3]{DoigWehrli}.

\begin{thm}\cite{DoigWehrli} \label{thm:d-invariants}
Let $\overline{W}:L(p,q) \to L(p,q')$ be a homology cobordism. Then $L(p,q)$ is homeomorphic to $L(p,q')$ and $\overline{W}$ induces $\pm$\textup{Id} on $H_1(L(p,q);\mathbb{Z})$.
\end{thm}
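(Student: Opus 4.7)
The plan is to reduce directly to the two cited results of Doig-Wehrli via the standard fact that Ozsv\'ath-Szab\'o $d$-invariants are Spin$^c$ rational homology cobordism invariants. First I would observe that since $\overline{W}$ is an integer homology cobordism, the inclusions of the two boundary components into $\overline{W}$ each induce isomorphisms on $H_1$, so composing them produces a canonical isomorphism $\phi: H_1(L(p,q)) \to H_1(L(p,q'))$. Dually, the long exact sequence of the pair $(\overline{W},\partial \overline{W})$ combined with the vanishing of $H^*(\overline{W},\partial \overline{W})$ in the relevant degrees guarantees that every Spin$^c$ structure $\mathfrak{s}$ on $L(p,q)$ extends uniquely to $\overline{W}$ and restricts to a unique Spin$^c$ structure $\phi(\mathfrak{s})$ on $L(p,q')$, giving an affine bijection compatible with $\phi$.

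Next, because $\overline{W}$ is a $\mathbb{Z}$-homology cobordism, the cobordism invariance of $d$ yields $d(L(p,q),\mathfrak{s}) = d(L(p,q'),\phi(\mathfrak{s}))$ for every Spin$^c$ structure $\mathfrak{s}$. The main theorem of Doig-Wehrli asserts that two lens spaces whose full lists of $d$-invariants match in this way must be diffeomorphic; hence $L(p,q) \cong L(p,q')$. After identifying the two boundary components with the same lens space, $\phi$ becomes an automorphism of $H_1(L(p,q)) \cong \mathbb{Z}/p\mathbb{Z}$ which preserves the $d$-invariant function, and Lemma~3 of Doig-Wehrli shows that the only such automorphisms are $\pm \mathrm{Id}$.

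The step I expect to require the most care is the bookkeeping identifying the Spin$^c$ bijection induced by $\overline{W}$ with the abstract $H_1$-automorphism analyzed in Doig-Wehrli: Spin$^c$ structures on $L(p,q)$ form a torsor over $H^2(L(p,q)) \cong \mathbb{Z}/p\mathbb{Z}$, and one must verify that the affine identification coming from $\overline{W}$ really is the one governed by $\phi$ in their setup. Once this compatibility is in place, the two cited results combine immediately to give both halves of the conclusion.
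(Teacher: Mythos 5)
Your proposal is correct and follows exactly the route the paper intends: the paper offers no independent argument, stating only that the theorem ``follows immediately'' from the main theorem of Doig--Wehrli together with their Lemma~3 via the homology-cobordism invariance of $d$-invariants, which is precisely the reduction you carry out (including the Spin$^c$/$H_1$ bookkeeping you flag). Your write-up simply makes explicit the steps the paper leaves to the reader.
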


\begin{corollary} \label{cor:qisq}
Let $K$ be a freely $(p,q)$-periodic knot and $K'$ be a freely $(p,q')$-periodic knot. If there is an equivariant cobordism between $K$ and $K'$, then $q' = \pm q^{\pm 1} \in \mathbb{Z}/p\mathbb{Z}$. In particular, $K'$ is a freely $(p,q)$-periodic knot. 
\end{corollary}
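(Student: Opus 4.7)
The plan is to reduce the equivariant setting to the quotient homology cobordism setting already handled by Lemma \ref{lemma:quotientcobordism} and Theorem \ref{thm:d-invariants}, and then invoke the classical homeomorphism classification of lens spaces.

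First, given an equivariant cobordism $(S^3 \times I, S, \overline{\rho})$ from $(K,\rho)$ to $(K',\rho')$, I would ignore the surface $S$ and apply Lemma \ref{lemma:quotientcobordism} to the ambient cobordism $(S^3 \times I, \overline{\rho})$. This produces a homology cobordism $\overline{W}$ from $S^3/\rho = L(p,q)$ to $S^3/\rho' = L(p,q')$. Theorem \ref{thm:d-invariants} then tells me that $L(p,q)$ and $L(p,q')$ are homeomorphic.

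The next step is to translate this homeomorphism into a numerical relation between $q$ and $q'$. By the Reidemeister--Franz classification of lens spaces, $L(p,q) \cong L(p,q')$ if and only if $q' \equiv \pm q^{\pm 1} \pmod{p}$, which is exactly the desired condition. For the final assertion, I would observe that any two free smooth $\mathbb{Z}/p\mathbb{Z}$-actions on $S^3$ whose quotients are homeomorphic lens spaces are smoothly conjugate (this follows from the standard linearity results on smooth free cyclic actions on $S^3$), so a freely $(p,q')$-periodic knot with $q' \equiv \pm q^{\pm 1} \pmod{p}$ is automatically freely $(p,q)$-periodic under a suitable identification.

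I do not expect any real obstacle here: the substantive content has already been established in Lemma \ref{lemma:quotientcobordism} and Theorem \ref{thm:d-invariants}, and the remainder is bookkeeping together with a citation to the lens space classification. The only mild subtlety is making precise the conjugacy statement in the last sentence, but this is standard and can be dispatched with a single reference.
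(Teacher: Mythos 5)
Your proposal matches the paper's argument: the paper likewise quotients the cobordism (via Lemma \ref{lemma:quotientcobordism}), applies Theorem \ref{thm:d-invariants} to conclude the lens spaces are homeomorphic, and then invokes the homeomorphism classification of lens spaces to get $q' \equiv \pm q^{\pm 1} \pmod p$. Your extra remark justifying the final sentence via conjugacy of free cyclic actions is a reasonable elaboration of what the paper leaves implicit.
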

\begin{proof}
Apply Theorem \ref{thm:d-invariants} to the quotient of the equivariant cobordism, and use the homeomorphism classification of lens spaces \cite{MR116336}.
\end{proof}

We now relate equivariant cobordisms to equivariant surfaces for freely periodic knots.

\begin{lemma} \label{lemma:unknotcobordism}
A freely $(p,q)$-periodic knot $K$ is an equivariant boundary if and only if there is an equivariant cobordism between $K$ and a freely $(p,q)$-periodic unknot.
\end{lemma}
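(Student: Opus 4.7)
The plan is to prove both implications, using the equivariant slice theorem to linearize $\rho$ near any fixed point and exchange an equivariant disk through the fixed point for a freely periodic unknotted boundary component.

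For the $(\Leftarrow)$ direction, given an equivariant cobordism $(S^3 \times I, \Sigma, \overline{\rho})$ from $K$ to a freely $(p,q)$-periodic unknot $U$, I would glue a 4-ball $B$ along $S^3 \times \{1\}$ and extend the free action to a smooth $\mathbb{Z}/p\mathbb{Z}$-action on $B$, which is possible because every free $\mathbb{Z}/p\mathbb{Z}$-action on $S^3$ is smoothly conjugate to a linear action that extends radially. The glued manifold is diffeomorphic to $B^4$ with $K$ on the boundary, so the task reduces to producing an equivariant disk $D_U \subset B$ with $\partial D_U = U$; then $\Sigma \cup D_U$ is the desired equivariant surface. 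To produce $D_U$: the complement $S^3 \setminus U$ is a solid torus with a free $\mathbb{Z}/p\mathbb{Z}$-action, so its quotient $L(p,q) \setminus \overline{U}$ is also a solid torus, making $\overline{U}$ the core of a handlebody in the genus 1 Heegaard splitting. By the uniqueness of this splitting cited in the excerpt, $\overline{U}$ is isotopic to the core of a standard Heegaard torus, and lifting the isotopy equivariantly takes $U$ to a standard linear unknot like $\{(z,0):|z|=1\}$; extending this isotopy radially to $B$ and smoothing at the origin transports the standard equivariant linear disk $\{(z,0):|z|\leq 1\}$ back to the required $D_U$.

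For the $(\Rightarrow)$ direction, given an equivariant surface $S \subset B^4$ bounded by $K$ under an extension $\rho: B^4 \to B^4$, the Lefschetz fixed-point theorem applied to a generator (using $L(\rho) = \chi(B^4) = 1$) yields a fixed point $f$, fixed by the entire cyclic group. If $f \in S$, the equivariant slice theorem gives a small invariant 4-ball $B_0$ about $f$ with $\rho$ linear on $B_0$ and $S \cap B_0$ an invariant linear 2-disk, so $S \setminus \mathrm{int}(S \cap B_0) \subset B^4 \setminus \mathrm{int}(B_0) \cong S^3 \times I$ is an equivariant cobordism from $K$ to the freely periodic unknot $S \cap \partial B_0$. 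If $f \notin S$, I first equivariantly modify $S$ to pass through $f$ by tubing with an invariant linear 2-disk $D \subset B_0$ through $f$ along an embedded arc whose $\mathbb{Z}/p\mathbb{Z}$-orbit consists of $p$ disjoint arcs (by general position in four dimensions), then restrict to $B^4 \setminus \mathrm{int}(B_0)$ to obtain an equivariant cobordism from $K$ to a freely periodic link on the inner sphere; a further equivariant band-move cobordism in a collar of the inner boundary merges this link into a single freely periodic unknot. Corollary \ref{cor:qisq} confirms that the resulting unknot is freely $(p,q)$-periodic.

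The main obstacle is the forward direction when $f \notin S$: performing the equivariant tubing and then equivariantly merging the resulting freely periodic boundary link into a single freely periodic unknot requires explicit equivariant band-move constructions on the inner three-sphere and careful general-position arguments in four dimensions to keep the $\mathbb{Z}/p\mathbb{Z}$-orbits of arcs and bands embedded.
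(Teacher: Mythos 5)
Your backward direction and the $f \in S$ branch of your forward direction are essentially the paper's argument (the paper standardizes $U$ via Proposition \ref{prop:unknotclasses} and cones $(S^3,U)$ to get the equivariant disk, rather than gluing on a ball with a linearized action, but the content is the same). The genuine gap is your $f \notin S$ branch. The merging step you defer is not merely technical: the $p$ tube meridians on the inner sphere form a single free orbit, and any invariant family of bands must have cardinality a multiple of $p$ (a band is a disk, and every finite-order homeomorphism of a disk has a fixed point, so no band can have nontrivial stabilizer in a free action), while each coherently oriented band move changes the number of link components by exactly one. Starting from the $p$ meridians, after $kp$ equivariant bands the component count is congruent to $p+kp$ mod $2$, so for $p$ even you can never reach one component by banding the meridians among themselves; you would instead have to band them onto $\partial D$, and even then you must still prove the resulting invariant knot is an unknot, which you do not do. A second, smaller issue: you need the fixed-point set of $\rho$ to be exactly one point (Smith theory plus the fact that the fixed set is a closed submanifold disjoint from $\partial B^4$, where the action is free); Lefschetz alone gives existence of a fixed point, and without uniqueness the action on $B^4 \setminus \mathrm{int}(B_0)$ need not be free, so you would not have an equivariant cobordism in the sense of the definition.

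The missing idea, and the way the paper proceeds, is that the case $f \notin S$ never occurs: the fixed point necessarily lies on $S$. Indeed, if $\rho$ acted freely on $S$, choose an invariant ball $N(F)$ around the fixed point disjoint from $S$ and pass to the quotient as in Lemma \ref{lemma:quotientcobordism}: then $\overline{S} = S/\rho$ is a surface in a homology cobordism between lens spaces whose only boundary circle is $\overline{K} = K/\rho$, so $[\overline{K}] = 0$ in $H_1$ of the cobordism and hence in $H_1(L(p,q)) \cong \mathbb{Z}/p\mathbb{Z}$. But $[\overline{K}]$ generates $H_1(L(p,q))$ because its preimage $K$ under the $p$-fold cover is connected, a contradiction for $p > 1$. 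With the fixed point on $S$, the forward direction is immediate, exactly as in your $f \in S$ case and in the paper: remove an invariant neighborhood $N(F)$, note the action on the complement is free and $S - N(F)$ is invariant, and observe that $S \cap \partial N(F)$ is an unknot (the link of a smooth point of a surface), with its $(p,q)$-periodicity supplied by Corollary \ref{cor:qisq}.
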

\begin{proof}
Let $S$ be an equivariant surface for $K$ with respect to an order $p$ diffeomorphism $\rho:B^4 \to B^4$. By classical Smith theory \cite{MR3199}, $\rho$ has a contractible fixed-point set $F$. Furthermore, since the fixed set of a self-diffeomorphism is a submanifold, $F$ is a single point. Similarly, the fixed-point set of $\rho|_S$ is also the single point $F$. Removing an equivariant neighborhood $N(F)$ from $B^4$ leaves an $S^3$ boundary component containing an unknot $U$. Hence we have $(S - N(F)) \subset S^3 \times I$ which is preserved by the free $\mathbb{Z}/p\mathbb{Z}$ action and $\partial (S - N(F)) = K \cup U$. That is an equivariant cobordism between $K$ and $U$. By Corollary \ref{cor:qisq}, $U$ comes with a freely $(p,q)$-periodic symmetry.

On the other hand, suppose that we have an equivariant cobordism between $K$ and $U$. By Proposition \ref{prop:unknotclasses}, $U$ can be taken to be the lift of a core of a handlebody in a genus 1 Heegaard decomposition of $L(p,q)$, and in particular the cone of $(S^3,U)$ is a smooth equivariant disk in $B^4$. Gluing this to the cobordism gives an equivariant surface for $K$. 
\end{proof}

We have all of the tools we need in order to classify which freely periodic knots bound equivariant surfaces in $B^4$. 

\simple*

\iffalse
Outline:
\begin{enumerate}
\item Given an equivariant cobordism $(S^3,K,\rho) \to (S^3,K',\rho')$, we can take the quotient to get a surface in a homology cobordism of lens spaces. 
\item The analysis of d-invariants should show that the induced map $H_1(L(p,q)) \to H_1(L(p,q'))$ is plus or minus identity. \cite{DoigWehrli}
\item $K$ is an equivariant boundary if and only if there is an equivariant cobordism between $K$ and some unknot. 
\item A freely periodic unknot can only represent the elements $\{\pm1, \pm q^{-1}\}$ in $H_1(L(p,q))$.
\item If $K$ represents the same homology class as an unknot in $H_1(L(p,q))$ then there is an equivariant cobordism from $K$ to the unknot.
  \end{enumerate}
\fi

\begin{proof}
Suppose $K \subset S^3$ is a freely $(p,q)$-periodic knot which bounds a surface $S \subset B^4$ which is invariant under an order $p$ diffeomorphism $\rho:B^4 \to B^4$ with $\rho |_{S^3}$ the free $\mathbb{Z}/p\mathbb{Z}$ action preserving $K$. Then by Lemma \ref{lemma:unknotcobordism} there is an equivariant cobordism from $K$ to a freely $(p,q)$-periodic unknot $U_{p,q}$. By Lemma \ref{lemma:quotientcobordism}, the quotient of this cobordism is a cobordism of the quotient knots $\overline{K}$ and $\overline{U}_{p,q}$ in a homology cobordism of lens spaces. In particular, there is a map $f:H_1(L(p,q)) \to H_1(L(p,q))$ with $[f(\overline{K})] = [\overline{U}_{p,q}]$ which is induced by a homology cobordism of lens spaces. By Theorem \ref{thm:d-invariants}, $f = \pm$Id. Hence $\overline{K}$ represents the same class as an unknot in $H_1(L(p,q))$. Then by Proposition \ref{prop:unknotclasses}, we have that $[\overline{K}]$ is simple.

On the other hand, suppose that $[\overline{K}]$ is simple. Then by Proposition \ref{prop:unknotclasses}, $\overline{K}$ is in the same homology class as the core $\overline{U}$ of a handlebody in the genus 1 Heegaard decomposition of $L(p,q)$, which lifts to a freely periodic unknot $U$. Hence there is a surface $\overline{S}$ in $L(p,q)$ with boundary $\overline{K} \cup \overline{U}$, and lifting $\overline{S}$ to $S^3$ gives us an equivariant surface $S$ with boundary $K \cup U$. We can then take the cone of $(S^3,U)$ with the free symmetry to obtain a smooth equivariant disk in $B^4$. Gluing this to $S$ gives us an equivariant surface for $K$. 
\end{proof}

\iffalse
\begin{theorem}(The orientation-preserving case of \cite[Theorem 3]{MR710104})
The group $\pi_0^+\textup{Diff}\, L(p,q))$ of isotopy classes of orientation-preserving diffeomorphisms of $L(p,q)$ is isomorphic to:
\begin{enumerate}
\item 1, if $p = 1$ or $p = 2$.
\item $\mathbb{Z}/2\mathbb{Z} \oplus \mathbb{Z}/2\mathbb{Z}$, generated by $\tau$ and $\sigma_+$, if $q^2 \equiv 1$ and $q \not\equiv \pm1$ (mod $p$).
\item $\mathbb{Z}/2\mathbb{Z}$, generated by $\tau$, otherwise.

%\item $\mathbb{Z}/2\mathbb{Z}$, generated by $\tau$, if $q^2 \not\equiv \pm 1$ (mod $p$).
%\item $\mathbb{Z}/2\mathbb{Z} \oplus \mathbb{Z}/2\mathbb{Z}$, generated by $\tau$ and $\sigma_+$, if $q^2 \equiv 1$ and $q \not\equiv \pm1$ (mod $p$).
%\item $\mathbb{Z}/2\mathbb{Z}$, generated by $\tau$, if $q \equiv \pm 1$ (mod $p$) and $p \neq 2$.
%\item $\mathbb{Z}/4\mathbb{Z}$, generated by $\sigma_-$, if $q^2 \equiv -1$ (mod $p$) and $p \neq 2$.
%\item $\mathbb{Z}/2\mathbb{Z}$, generated by $\sigma_-$, if $p=2$.
  \end{enumerate}

\end{theorem}
\fi

We now turn to checking this condition in practice by considering freely periodic knot diagrams, the standard way in which we expect to present a freely periodic knot. 

\begin{definition} \label{def:diagram}
A freely $(p,q)$-periodic diagram is the closure of a tangle consisting of the concatenation of $p$ identical tangles and $q$ full twists. See Figures \ref{fig:T(2,7)}, \ref{fig:T(2,7)simple}, and \ref{fig:(5,1)} for examples.
\end{definition}

\begin{thm}\cite[Theorem 1]{MR1664974}
Every freely $(p,q)$-periodic knot has a freely $(p,q)$-periodic diagram. 
\end{thm}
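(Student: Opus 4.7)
The plan is to work in the quotient $L(p,q)$ using the standard genus one Heegaard splitting $L(p,q) = V_1 \cup_\phi V_2$, and to reconstruct a diagram of $K$ by lifting a diagram of $\overline{K} = \pi(K)$ drawn on the Heegaard torus $\Sigma = \partial V_2$. Here $\phi$ identifies the meridian $\mu_1$ of $V_1$ with the curve on $\Sigma$ representing $p\mu_2 + q\lambda_2$. Under the covering $\pi\colon S^3 \to L(p,q)$, the preimage $\widetilde{V_2} = \pi^{-1}(V_2)$ is a solid torus on which the deck transformation acts by a rotation of $2\pi/p$ about its core.

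First I would isotope $\overline{K}$ so that it is transverse to a chosen meridian disk $D \subset V_2$ and then push it into a collar $\Sigma \times [0,\varepsilon] \subset V_2$. Projecting onto $\Sigma$ and recording the transverse intersections with $D$ as crossings yields a knot diagram on $\Sigma$. Cutting $\Sigma$ along $\mu_2$ produces a rectangle carrying a tangle $\tau$ whose endpoints on the top and bottom edges match after reidentification.

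Now I lift this picture to $\widetilde{V_2} \subset S^3$. The preimage of the rectangle is a cyclic stack of $p$ rectangles $R_1, \ldots, R_p$, each carrying an identical copy of $\tau$; this supplies the $p$ identical tangles called for in Definition \ref{def:diagram}. Reassembling $\widetilde{V_2}$ as a standardly embedded solid torus in $S^3$ then requires a twist: because $\phi$ sends $\mu_1$ to $p\mu_2 + q\lambda_2$, the complementary solid torus $\widetilde{V_1} \subset S^3$ winds $q$ times longitudinally for every $p$ meridional rotations, which forces exactly $q$ full twists into the closure of the stacked tangles. The resulting planar picture is the closure of the concatenation of $p$ identical copies of $\tau$ together with $q$ full twists, matching Definition \ref{def:diagram}.

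The main obstacle is the framing bookkeeping: confirming that the number of full twists forced by the embedding $\widetilde{V_2} \hookrightarrow S^3$ is precisely $q$ rather than some other representative of $q \pmod p$, and that the twists can be collected at a single location in a way compatible with the cyclic symmetry. I would address the first point by computing the linking number in $S^3$ between the cores of $\widetilde{V_1}$ and $\widetilde{V_2}$ directly from the surgery description of $L(p,q)$, and the second by sliding twists around the stacked tangles and appealing to isotopy invariance of the knot type. Once these framing details are pinned down, the construction produces a freely $(p,q)$-periodic diagram for $K$.
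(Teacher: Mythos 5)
The paper does not actually prove this statement: it is quoted from \cite{MR1664974}, so there is no internal argument to compare against, and your proposal must stand on its own. Its scaffolding is the standard one (pass to the quotient, isotope the quotient knot into a Heegaard solid torus, cut along a meridian disk, lift, and account for framing), but there is a genuine gap at exactly the step that produces the $q$ full twists. Your description of the deck transformation on $\widetilde{V_2}$ as ``a rotation of $2\pi/p$ about its core'' cannot be correct: such a rotation fixes the core pointwise, whereas the action is free. In coordinates $\widetilde{V_2}\cong S^1\times D^2$ the generator acts by $(\theta,x)\mapsto(\theta+2\pi/p,\ e^{2\pi i q/p}x)$, i.e.\ a translation $1/p$ of the way along the core together with a $2\pi q/p$ rotation of the meridian disks (with $q$ replaced by $q^{-1}$ bmod $p$ for the other solid torus). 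This fractional rotation is precisely why consecutive lifted copies of your tangle $\tau$ are \emph{not} literally identical in the standard product coordinates --- they differ by disk rotations --- and it is the source of the $q$ full twists once those rotations are collected (for instance by conjugating the action by a twist self-homeomorphism of the solid torus, which turns it into a pure shift at the cost of changing the framing by $q$). With the action as you describe it, the lift would simply be $p$ identical boxes with no twists, and your substitute mechanism --- that $\widetilde{V_1}$ ``winds $q$ times longitudinally'' --- does not supply them: $\widetilde{V_1}$ is just the complementary unknotted solid torus, and the linking number you propose to compute between the cores of $\widetilde{V_1}$ and $\widetilde{V_2}$ is $1$ for every $q$, since they form a Hopf link. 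The correct bookkeeping compares the product structure on $\widetilde{V_2}$ lifted from the cut-open quotient (with respect to which the diagram is literally $p$ identical boxes) with the Seifert-framed, blackboard product structure of the unknotted solid torus in $S^3$; the discrepancy is $q$ full twists, well defined mod $p$, and any extra multiple of $p$ twists can be redistributed one per box so that exactly $q$ remain, matching Definition \ref{def:diagram} (whether one lands on $q$ or $q^{-1}$ is the choice of Heegaard solid torus, consistent with the paper's remark on conventions after Theorem \ref{thm:simplediagram}).

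There are also smaller points you would need to tidy. You must first argue that $\overline{K}$ can be isotoped into $V_2$ at all (push it off the spine of $V_1$ by general position) and note that isotopies in $L(p,q)$ lift to equivariant isotopies of $K$, so the diagram you produce represents the original knot with its original symmetry. Crossings arise from double points of the projection to the torus (or to an annulus), not from intersections with the meridian disk $D$; those intersections are the tangle endpoints. And cutting $\Sigma$ along $\mu_2$ yields an annulus, not a rectangle --- what you want is to cut the solid torus along the meridian disk to obtain a tangle in $D^2\times I$. None of these is fatal, but the misidentification of the deck action and of the framing computation sits at the heart of the theorem and must be repaired for the argument to go through.
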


From a freely periodic diagram, it is extremely simple to check if the knot is an equivariant boundary. 

\simplediagram*

\begin{proof}
It is clear that a tangle with one strand is homologous to the core $c$ of a genus 1 Heegaard splitting for $L(p,q)$. It follows immediately from the definition of a lens space that the other core is $q^{-1} \cdot c \in H_1(L(p,q))$ so that a diagram with $q^{-1}$ strands is homologous to the other core. 
\end{proof}

\begin{remark}
Theorem \ref{thm:simplediagram} depends on our convention for freely $(p,q)$-periodic diagrams. We could instead switch the role of the two cores in our diagrams so that a freely $(p,q)$-periodic diagram would have $q^{-1}$ (mod $p$) full twists. In this case the simple homology classes would be represented by diagrams with $\pm1$ or $\pm q$ strands. 
\end{remark}

\begin{remark}
If we instead consider surfaces which need not be orientable, then a version of Theorem \ref{thm:simple} using homology with $\mathbb{Z}/2\mathbb{Z}$ coefficients implies that every freely periodic knot bounds a smooth equivariant surface in $B^4$.
\end{remark}

We are also interested in the special case of torus knots. We first describe their free symmetries. 

\begin{thm} \label{thm:torusknotsymmetries}
There is a freely $(p,q)$-periodic symmetry of $T(r,s)$ if and only if gcd$(p,rs) = 1)$, and $\pm q^{\pm1} \equiv sr^{-1}$ (mod $p$). With respect to this symmetry, there is a freely $(p,q)$-periodic diagram on $r$ strands if and only if $\pm q \equiv sr^{-1}$ (mod $p$). Finally, if $\rho$ and $\rho'$ are two free periods of the same order, then the subgroups of the orientation-preserving diffeomorphism group of the exterior of $T(r,s)$ generated by $\rho$ and $\rho'$ are conjugate.
\end{thm}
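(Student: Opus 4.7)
The plan is to combine the standard parametrization of the torus knot on the Clifford torus with braid-group bookkeeping, closing with a Seifert-fibered rigidity argument for the exterior. Place $T(r,s) = \{(e^{2\pi i s t}, e^{2\pi i r t})/\sqrt{2} : t \in [0,1]\}$ and consider the model action $\rho_q(z, w) = (\alpha z, \alpha^q w)$ with $\alpha = e^{2\pi i/p}$. For the existence statement, the requirement $\rho_q(T(r,s)) = T(r,s)$ produces, for each $t$, a $t'$ with $st' \equiv st + 1/p$ and $rt' \equiv rt + q/p$ modulo~$1$; eliminating $t' - t$ yields the single congruence $qs \equiv r \pmod p$. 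The use of $s^{-1}$ requires $\gcd(p, s) = 1$, and freeness requires $\gcd(p, q) = 1$, so $\gcd(p, rs) = 1$. Since being ``freely $(p,q)$-periodic'' depends only on the unoriented homeomorphism type of the quotient $L(p,q)$, which is invariant under $q \mapsto \pm q^{\pm 1}$, the full set of admissible $q$ is exactly $\pm q^{\pm 1} \equiv sr^{-1} \pmod p$.

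For the $r$-strand diagram statement, represent $T(r,s)$ as the closure of $\delta^s$, where $\delta = \sigma_1 \sigma_2 \cdots \sigma_{r-1} \in B_r$, and use that the full twist $\Delta^2 = \delta^r$ is central. A freely $(p,q)$-periodic diagram on $r$ strands is by definition the closure of a braid $\tau^p \Delta^{2q}$ for some $\tau \in B_r$, so its closure equals $T(r,s)$ iff $\tau^p \Delta^{2q}$ is conjugate to $\delta^s$ in $B_r$, equivalently $\tau'^p = \delta^{s-qr}$ for some conjugate $\tau'$ of $\tau$. Because $\gcd(r,s) = 1$ implies $\gcd(r, s - qr) = 1$, the centralizer of $\delta^{s-qr}$ in $B_r$ is the cyclic subgroup $\langle \delta \rangle$, so $\tau' = \delta^c$ for some integer $c$; distinct powers of $\delta$ are non-conjugate (their exponent sums in $B_r^{\mathrm{ab}} = \mathbb{Z}$ differ), and so $\delta^{cp} = \delta^{s-qr}$ forces $cp = s - qr$, i.e., $p \mid s - qr$, giving $q \equiv sr^{-1} \pmod p$. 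Conversely, $\tau = \delta^{(s-qr)/p}$ realizes the decomposition directly. The $\pm$ sign reflects the freedom to reverse the generator of the $\mathbb{Z}/p$ action, which negates $q$.

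For the conjugacy claim, I would appeal to the Seifert fibered structure on the exterior $E = S^3 \setminus \nu(T(r,s))$, whose base orbifold is a disk with two cone points of multiplicities $r$ and $s$. Since a nontrivial torus knot exterior has a unique Seifert fibration up to isotopy, and by the Meeks--Scott theorem any finite group action on such a Seifert fibered space (with $\chi$ of the base orbifold nonpositive) can be conjugated in $\textup{Diff}^+(E)$ to a fibration-preserving one, any free $\mathbb{Z}/p$ action may be assumed to preserve the fibration. The induced action on the base orbifold must fix each of the two cone points (which are distinguishable since $r \neq s$), and freeness of the full action rules out any nontrivial rotation on the base (which would force fixed points on the generic $S^1$ fiber over a fixed point). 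Hence the action lies in the canonical $S^1$ of fiber rotations, which contains a unique cyclic subgroup of each order. Therefore any two free periods $\rho, \rho'$ of the same order generate conjugate subgroups of $\textup{Diff}^+(E)$. The main technical input is Meeks--Scott rigidity; for torus knot exteriors this is well-known and can also be recovered from direct computations of their orientation-preserving mapping class groups.
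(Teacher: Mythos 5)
Your first and third paragraphs are in the same spirit as the paper (the paper also reduces everything to the Seifert fibration of the torus-knot exterior: the free symmetry must preserve the fibration, must act trivially on the base orbifold $D(r,s)$, hence is a fiber rotation, unique up to conjugacy for each order $p$), but your second paragraph has a genuine gap. You assert that the closure of $\tau^p\Delta^{2q}$ equals $T(r,s)$ if and only if $\tau^p\Delta^{2q}$ is conjugate to $\delta^s$ in $B_r$. Having the same closure does \emph{not} imply conjugacy in a fixed braid group: Markov's theorem allows (de)stabilization, and for example $\sigma_1^3\sigma_2^{-1}$ and $(\sigma_1\sigma_2)^2$ both close to the trefoil in $B_3$ but have different exponent sums, hence are not conjugate. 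A uniqueness-up-to-conjugacy statement for braid representatives of torus knots (Birman--Menasco type) is a substantial theorem, holds only at the braid index $\min(r,s)$, and the theorem you are proving is needed for $r$ equal to either parameter; so the ``only if'' direction of your $r$-strand claim is unproved as written. There is also a definitional mismatch: the paper's freely $(p,q)$-periodic diagrams are closures of \emph{tangles}, not braids, so even reducing to the form $\tau^p\Delta^{2q}$ with $\tau\in B_r$ requires an argument (the paper sidesteps all of this by only exhibiting the decomposition $s=\pm qr+np$ of the standard $r$-strand diagram, and by controlling arbitrary diagrams homologically via Theorem \ref{thm:simplediagram} rather than via braid conjugacy).

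Two smaller wiring issues. First, in your opening paragraph the conclusion that the admissible $q$ are \emph{exactly} those with $\pm q^{\pm1}\equiv sr^{-1}$ does not follow from the model computation plus homeomorphism invariance alone; it needs the rigidity statement that every free period is conjugate to the model fiber rotation, i.e., your third paragraph must be proved first and invoked here (the paper's proof is organized this way). Second, in the rigidity argument you rule out nontrivial rotations of the base but not reflections: an order-two symmetry of $D(r,s)$ can fix both cone points by reflecting across an arc through them, and one must argue (as the paper does) that an orientation-preserving lift of such a reflection inverts the fibers over the fixed arc and therefore has fixed points, contradicting freeness. With these repairs your route (explicit Clifford-torus model plus Seifert rigidity) is viable and close to the paper's; the braid-theoretic detour in the middle is the part that needs to be replaced or supported by a genuinely stronger input.
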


\begin{proof}

By \cite[Theorem 2.2]{MR856847} and \cite{MR3069424} (see also \cite[Exercise 10.6.4]{MR1417494}), the freely periodic symmetry preserves a Seifert fibered structure on the exterior of $T(r,s)$, which is a circle bundle over $D(r,s)$, the disk with cone points of orders $r$ and $s$. For $p>2$, this orbifold has no order $p$ symmetries. For the involution reflecting across an arc containing the two cone points, we must also reflect the fibers to get an orientation-preserving symmetry of the exterior of $T(r,s)$, but this has global fixed points. Thus our free symmetry acts as identity on the base orbifold, and therefore as a rotation on the fibers. In particular, there is a unique symmetry for each $p$. 

%It follows from \cite{H2} and \cite{I} that the orientation-preserving diffeomorphism group of the exterior of $T(r,s)$ deformation retracts onto $O(2)$, and restricting to diffeomorphisms which preserve an orientation of $T(r,s)$ gives us $SO(2)$. Note that $SO(2)$ has a unique order $p$ subgroup for each $p$. 

When $p$ has a common factor with $r$ or $s$, then the order gcd$(p,rs)$ subgroup has a fixed circle in $S^3$, and so there is no free symmetry of order $p$. On the other hand, the symmetry is free when gcd$(p,rs) = 1$. We will describe these free symmetries explicitly. Consider the standard tangle diagram for $T(r,s)$ with $r$ strands. Then there are $s$ total twists, which can be grouped as $\pm q$ full twists and $p$ concatenated tangles of $n$ twists each (see Figure \ref{fig:T(2,7)simple} for an example). Hence $s = \pm q \cdot r + n \cdot p$ from which we deduce that $\pm q \equiv sr^{-1}$ (mod $p$). However, a free $(p,q)$-symmetry can also be regarded as a free $(p,q^{-1})$-symmetry (c.f. the homeomorphism classification of lens spaces). In particular the diagram with $s$ strands has $\pm q \equiv rs^{-1}$ (mod $p$).
\end{proof}

Theorem \ref{thm:simplediagram} gives us the following entirely numerical corollary when considering torus knots. 

\torusknots*

\begin{proof}
Consider the freely $(p,q)$-periodic diagram for $T(r,s)$ with $r$ strands which consists of $s$ total twists: $\pm q$ full twists and $p$ tangles of $n$ twists each. By Theorem \ref{thm:simple} and Theorem \ref{thm:simplediagram}, $T(r,s)$ is an equivariant boundary if and only if $r \equiv \pm 1$ or $r \equiv \pm q^{-1}$ (mod $p$). By Theorem \ref{thm:torusknotsymmetries}, $\pm q \equiv sr^{-1}$ (mod $p$) so that $p$ is a divisor of $r-1,r+1,s-1,$ or $s+1$.
\end{proof}

\begin{remark}
Note that a freely $(2,q)$ or $(3,q)$-periodic knot is always an equivariant boundary by Theorem \ref{thm:simplediagram}.
\end{remark}

Our final result is the observation that the genus of an equivariant surface for a freely periodic knot must be a multiple of $p$.

\begin{thm} \label{thm:genus}
Let $S$ be an equivariant surface for a freely $(p,q)$-periodic knot. Then the genus of $S$ is a multiple of $p$. 
\end{thm}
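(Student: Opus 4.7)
My plan is to apply the Riemann--Hurwitz formula to the quotient map $S \to \overline{S} := S/\rho$. First, I would pin down the fixed-point structure of $\rho|_S$. By the Smith-theoretic argument used in the proof of Lemma \ref{lemma:unknotcobordism}, the extension $\rho : B^4 \to B^4$ has a single fixed point $F$, and $F$ lies on $S$ with $\{F\}$ being the entire fixed set of $\rho|_S$. Because an orientation-reversing finite-order diffeomorphism of a surface has a $1$-dimensional fixed locus through each fixed point (its differential at the point is a reflection), the isolated-fixed-point condition forces $\rho|_S$ to be orientation-preserving and to act locally at $F$ as a rotation by $2\pi k/p$ for some $k$ coprime to $p$.

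Next I would form the quotient $\overline{S}$. Since $\rho$ acts freely on $K = \partial S$ and freely on $S \setminus \{F\}$, the surface $\overline{S}$ is compact and orientable with a single boundary component $K/\rho$, and the projection $S \to \overline{S}$ is a degree-$p$ cyclic branched cover with exactly one branch point, of ramification index $p$. The Riemann--Hurwitz formula then gives
\[
\chi(S) = p \cdot \chi(\overline{S}) - (p-1).
\]
Writing $g$ for the genus of $S$ and $\overline{g}$ for that of $\overline{S}$ and substituting $\chi(S) = 1 - 2g$ and $\chi(\overline{S}) = 1 - 2\overline{g}$, this reduces immediately to $1 - 2g = 1 - 2p\overline{g}$, i.e.\ $g = p\overline{g}$, so the genus of $S$ is a multiple of $p$.

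The only delicate ingredient is the fact that $\rho|_S$ has exactly one fixed point, since this is what drives the computation. Fortunately this was already established as part of the proof of Lemma \ref{lemma:unknotcobordism}, so I do not expect any genuine obstacle beyond correctly invoking it.
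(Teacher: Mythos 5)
Your proof is correct and follows essentially the same route as the paper: both hinge on the fact, taken from the proof of Lemma \ref{lemma:unknotcobordism}, that $\rho|_S$ has the single fixed point $F$, and then amount to an Euler characteristic computation for the quotient surface. The paper deletes $F$, quotients by the resulting free action, and concludes by parity of $\chi$ for a surface with two boundary components, while you keep $F$ and apply Riemann--Hurwitz to the branched cover $S \to S/\rho$; this is the same computation in different packaging, with the minor added benefits that you get the sharper statement $g(S) = p\,g(S/\rho)$ and you make explicit the (implicitly needed) fact that $\rho|_S$ is orientation-preserving, so the quotient is orientable.
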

\begin{proof}
Let $n$ be the genus of $S$ so that $\chi(S) = 1-2n$. Since $S$ has exactly one fixed point $F$ (see the proof of Lemma \ref{lemma:unknotcobordism}), $\chi(S - F) = -2n$. Quotienting $S-F$ by the free $\mathbb{Z}/p\mathbb{Z}$ action leaves a surface $\overline{S}$ with $\chi(\overline{S}) = -2n/p$. But since $\overline{S}$ has exactly two boundary components, it must have an even Euler characteristic and so $p|n$.
\end{proof}

\section{Examples and questions}

We conclude with some examples and questions.

\begin{example}
Consider the torus knot $T(2,7)$. It has a unique free period of order 3 which is shown in Figure \ref{fig:T(2,7)simple}. By Corollary \ref{cor:torusknots}, $T(2,7)$ bounds an invariant surface in $B^4$ with respect to its free $(3,2)$-period. This surface can be seen directly from Figure \ref{fig:T(2,7)}, where changing the sign of the crossing in the bottom of each of the 3 tangle boxes gives an equivariant genus 3 cobordism to the unknot, which then bounds an equivariant disk in $B^4$. Note that by Theorem \ref{thm:genus}, this is the minimum possible genus for such a surface, since $T(2,7)$ is not slice.

\begin{figure} 
\scalebox{.5}{\includegraphics{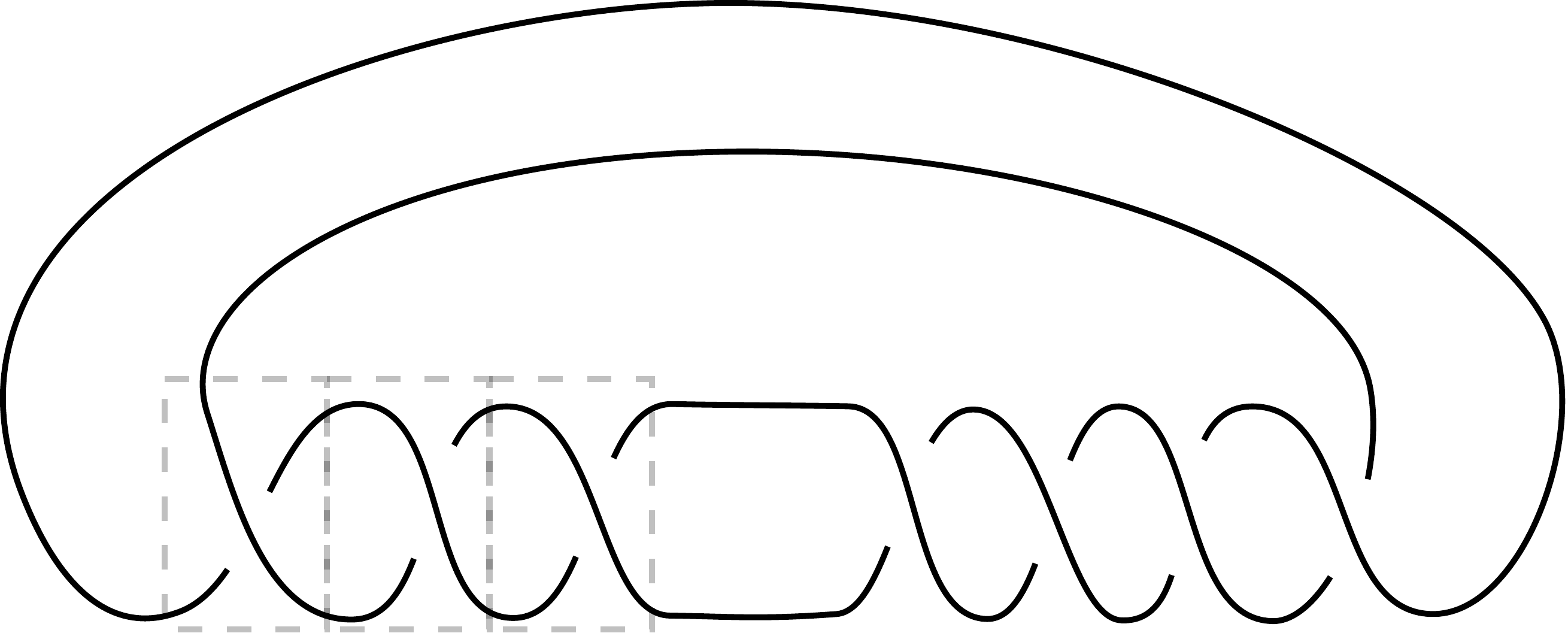}}
\caption{A freely $(3,2)$-periodic diagram for $T(2,7)$.}
\label{fig:T(2,7)simple}
\end{figure}

\end{example}

\begin{figure}
\scalebox{.5}{\includegraphics{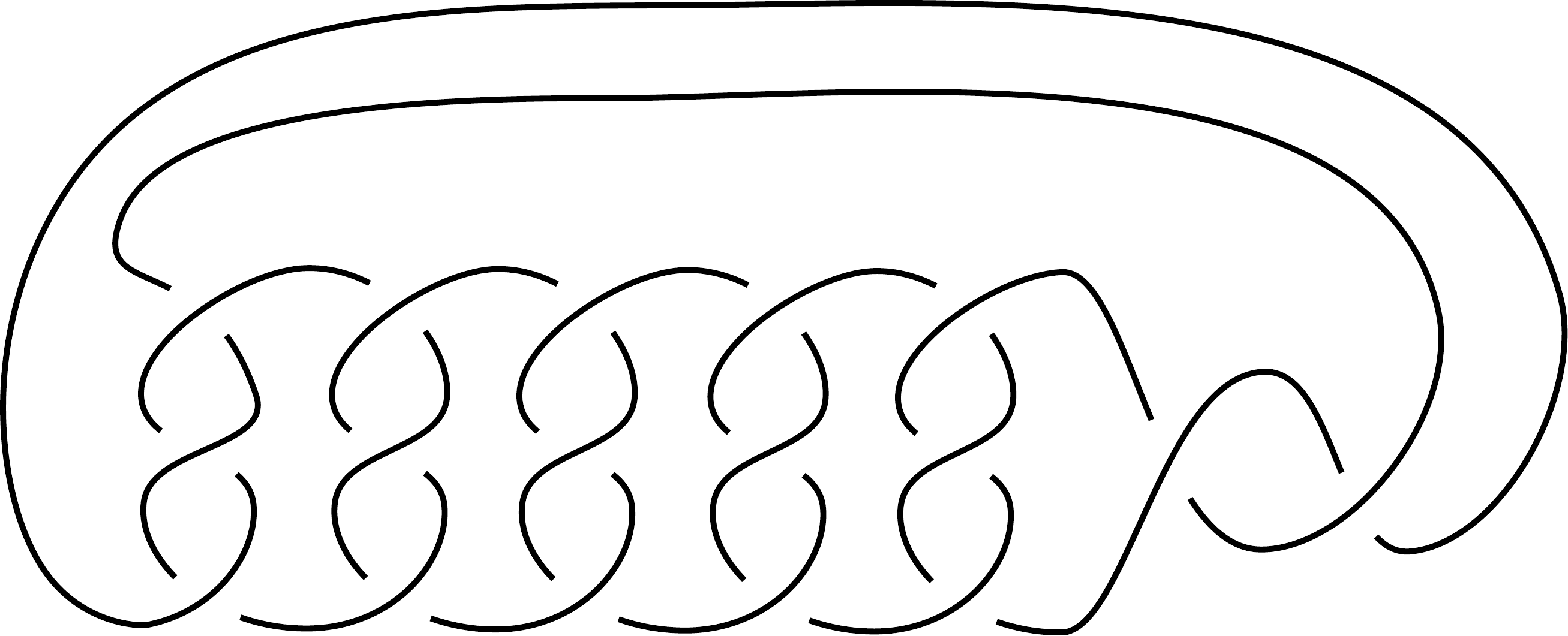}}
\caption{A freely $(5,1)$-periodic knot.}
\label{fig:(5,1)}
\end{figure}

\begin{example}
Consider the torus knot $T(2,3)$. Since there are no primes which divide $1,3,2,$ or $4$ and are relatively prime to $2$ and $3$, Corollary \ref{cor:torusknots} implies that $T(2,3)$ does not equivariantly bound with respect to any of its infinitely many free symmetries.
\end{example}

\begin{example}
Consider the freely $(5,1)$-periodic knot $K$ shown in Figure \ref{fig:(5,1)}. It has a signed count of 2 strands, so by Theorem \ref{thm:simple} and Theorem \ref{thm:simplediagram}, $K$ does not bound an equivariant surface in $B^4$. 

\end{example}

In principle one could use Theorem \ref{thm:simple} and Theorem \ref{thm:simplediagram} to show that some freely-periodic slice knots do not bound equivariant surfaces (let alone disks). However, every freely-periodic slice knot we know of represents a simple homology class in the quotient lens space.

\begin{question}
What tools can give a lower bound on the genus of an equivariant surface for a freely-periodic knot? Does there exist a freely-periodic slice knot which does not bound any equivariant disk?
\end{question}

Our main result requires methods relying on smooth topology. However, we do not know of a freely $(p,q)$-periodic knot which does not represent a simple homology class in $L(p,q)$, but which is an equivariant boundary in the topologically locally flat category.

\begin{question}
Does Theorem \ref{thm:simple} hold in the topological category?
\end{question}
\pagebreak

\bibliography{bibliography}
\bibliographystyle{alpha}
\end{document}